\documentclass{article}

\usepackage[square,sort&compress,colon,numbers]{natbib}
\usepackage[utf8]{inputenc}
\usepackage{amsthm,amsmath,amsfonts,amssymb,array, graphicx, epsfig, fancyhdr, stmaryrd,algpseudocode}
\usepackage{mathrsfs} 
\usepackage{pgfplots}
\usepackage{tikz}
\usepackage{xcolor}
\usepackage{comment}
\usepackage[caption=false]{subfig}
\definecolor{backcolor}{rgb}{.7,.7,1}
\definecolor{backcolor2}{rgb}{1,.7,0.7}
\usetikzlibrary{positioning}
\usepackage{enumitem}
\usepackage{hyperref}

\usetikzlibrary{shapes,backgrounds,calc}

\usetikzlibrary{decorations.pathreplacing}
\usetikzlibrary{arrows.meta}
\pgfplotsset{compat=1.17}
\makeatletter
\tikzset{circle split part fill/.style  args={#1,#2}{%
 alias=tmp@name, 
  postaction={%
    insert path={
     \pgfextra{%
     \pgfpointdiff{\pgfpointanchor{\pgf@node@name}{center}}%
                  {\pgfpointanchor{\pgf@node@name}{east}}%
     \pgfmathsetmacro\insiderad{\pgf@x}
      \fill[#1] (\pgf@node@name.base) ([xshift=-\pgflinewidth]\pgf@node@name.east) arc
                          (0:180:\insiderad-\pgflinewidth)--cycle;
      \fill[#2] (\pgf@node@name.base) ([xshift=\pgflinewidth]\pgf@node@name.west)  arc
                           (180:360:\insiderad-\pgflinewidth)--cycle;            
         }}}}}  
 \makeatother  

\pgfplotsset{select coords between index/.style 2 args={
    x filter/.code={
        \ifnum\coordindex<#1\fi
        \ifnum\coordindex>#2\fi $$L_f v (x) = w(x)\quad v\in D(L_f).$$

    }
}}
\tikzset{
    position/.style args={#1:#2 from #3}{
        at=(#3), anchor=#1+180, shift=(#1:#2)
    }
}
\newtheoremstyle{plainNoItalics}{}{}{\normalfont}{}{\bfseries}{.}{ }{}

\theoremstyle{plain}
\newtheorem{theorem}{Theorem}[section]

\newtheorem{corollary}[theorem]{Corollary}


\newtheorem*{theorem*}{Theorem}

\newtheorem*{lemma*}{Lemma}
\newtheorem*{corollary*}{Corollary}

\newtheorem*{observation*}{Observation}
\newtheorem*{example*}{Example}
\newtheorem*{assumption*}{Assumption}

\theoremstyle{definition}
\newtheorem{definition}{Definition}
\newtheorem{remark}[definition]{Remark}

\theoremstyle{plain}

\newcommand{\gr}{\mbox{gr}}


\usepackage{amsmath}

    \bibliographystyle{plainnat}

\title{Some suggestions concerning the conjecture in: "Tractable semi-algebraic approximation using Christoffel-Darboux kernel"}
\author{Mathias Oster
\thanks{Technische Universit\"at Berlin,
            Strasse des 17. Juni 135,
            10623 Berlin, Germany,
  oster@math.tu-berlin.de }
 \and
Reinhold Schneider
\thanks{Technische Universit\"at Berlin,
            Strasse des 17. Juni 135, 
            10623 Berlin, Germany,
  schneidr@math.tu-berlin.de} }
\date{March 2022}

\begin{document}
\maketitle
\begin{abstract}
    In "Tractable semi-algebraic approximation using Christoffel-Darboux kernel"  Marx, Pauwels, Weisser, Henrion and Lasserre conjectured, that the approximation rate $\mathcal O (\frac 1 {\sqrt d})$ of a Lipschitz functions by a semi-algebraic function induced by a Christoffel-Darboux kernel of degree $d$ in the $L^1$ norm can be improved for more regular functions. Here we will show, that for semi-algebraic and definable functions the results can be strengthened to a rational approximation rate in the $L^\infty$ norm.
\end{abstract}
\textbf{Keywords: }
approximation theory, convex optimization, moments, positive polynomials, orthogonal polynomials.

\section{Introduction}

Approximating functions based on the data of their graphs is a fruitful and hard undertaking. In \cite{PauwelsMarx} was suggested, that one can reconstruct the graph of a function as the set of global minima  of the inverse of the Christoffel function associated to the measure $\mu$ induced by the graph. In particular, let $\Pi_d$ the multi-polynomials of degree at most $d$ in $n+1$ variables and let $\{b_i, i\in\{1,\dots,N\}, b_i\in \Pi_d\}$ be a basis of $\Pi_d$, i.e. $N=\begin{pmatrix}n+d\\n\end{pmatrix}$. Then by denoting $[b_i]$ as a vector with the basis functions as components one gets that 
$$[M_{\mu,d}]_{ij} = \int_\Omega [b_i(x,y)]_i [b_j(x,y)]_jd\mu(x,y) = \int_\Omega [b_i(x,f(x))]_i[b_j(x,f(x))]_jdx$$ 
is the moment matrix associated to the degenerate measure $\mu$ induced by the graph of $f$. As the measure is degenerate, the moment matrix is not invertible. Applying a Tikhonov regularisation $M_{\mu,\beta,d} = M_{\mu,d}+\beta I = M_{\mu+\beta\mu_0}$ one can define
$$q_{\beta,d}(x) = \min_{y\in \mathbb R} [b_i(x,y)]^T M_{\mu,\beta,d}^{-1}[b_i(x,y)].$$
In \cite{PauwelsMarx}, it was shown, that for Lipschitz functions one can approximate $f$ by $q_{\beta,d}$ with rate $\mathcal O(\frac 1 {\sqrt{d}})$ in $L^1(\Omega)$ for some compact $\Omega$.

In \cite{Pauwels2021} the Christoffel function of the degenerate measure was characterized as  $$\Lambda_{d} (x,y) = \inf_{P\in \Pi_d, P(x,y)=1} \int_\Omega P(z)^2 dz =\begin{cases}0 \quad \text{if } [b_i(x,y)]_i \in ker(M_f),\\ ([b_i(x,y)]_iM^\dagger[b_i(x,y)]_i)^{-1}\quad \text{else}.\end{cases} $$
If $(x,y)\notin V $ then $[b_i(x,y)]\in ker(M_f)$. Furthermore, there is a degree $d_*>0$ such that for all $d\geq d_*$ it holds, that if $\Lambda_d(x,y)=0$ then $(x,y)\neq V$. As $q_d$ can be written in terms of the Christoffel function as
$$q_d(x) = \frac 1{\displaystyle\max_{y\in\mathbb R} \Lambda_d(x,y)}$$
we will focus on the Christoffel function of a degenerate measure.

In this paper are some arguments, that for a semi-algebraic function $f$ one can find a degree $d$ such that the approximation induced by the inverse of the Christoffel function is exact except on a tubular neighbourhood of the critical set of $f$ and its derivatives. Furthermore, as semi-algebraic functions can apoproximate definable functions in way similiar to the approximation of $C^d$ functions by polynomials, we get a rational  approximation rate in the $L^\infty$ norm for definable functions.

\section{Notation and preliminaries}
Let $\gr f = \{(x,f(x)):x\in X\}$ be the graph of $f$, $\overline{U}$ be closure of $U$, $\partial U$ the boundary  of $U$ and $U^o = \overline{U}\setminus\partial U$ the interior of $U$. 

Let $\mathbb B_\delta(x)$ denote the ball around $x$ with radius $\delta$. Furthermore, let 
\begin{align*}
    reach(X) &= \sup \{r \in \mathbb{R}: 
             \forall  x \in \mathbb{R}^n\setminus X\text{ with }{\rm dist}(x,X) < r \\
              &\text{exists a unique closest point }y \in X\text{ s.t. }{\rm dist}(x,y)= {\rm dist}(x,X)\}.
\end{align*}
Recall, that a semi-algebraic set is finite collection of polynomial equalities and inequalities.

\section{Main results: Semi-algebraic functions}

Let $f:\mathbb R^n\supset X \to\mathbb R$ be a semi-algebraic function with semi-algebraic domain $X$, such that $\gr f \subset \overline{\gr f ^o}$, i.e. we avoid isolated points of the graph. There is a finite decomposition of $X$ in semi-algebraic cells $A_i$ such that $f|_{A_i}$ is analytic, \cite{SemiAlg-Analyt}. Furthermore, there is a collection of finitely many sets $C_i\in\{A_i\}$ of lower dimensional submanifolds of $\mathbb R^n$ that are the critical points of $f$ or any derivative. These are precisely the cells with codimension greater than 1. Let $S_{crit} = \cup_{i}C_i$.  
\\

Let $V$ be the Zariski closure of $\gr f$, i.e. intuitively, extend the defining polynomial equations onto all of $X$. Here, the structure of $f$ is important, as for semi-algebraic functions $V$ is actually a true subset of the ambient space.

 For $\delta_1>0$ consider $S_{crit, \delta_1} = \cup_{x\in S_{crit}} \mathbb B_{\delta_1}(x)$ with $S_{crit} = \cup_i \gr(f|_{C_i})$, i.e. we remove some tubular neighborhood of the critical set of $f$. Note, that we can choose $\delta_1$ such that all $A_i$ that are open in $\gr f$ are not contained entirely in $S_{crit,\delta_1}$. (We will see all important bits of the graph). Intuitively, the critical set of the semi-algebraic functions, is where some irreducible components of the Zariski closure intersect. In the neighborhood of this points it is difficult to find an uniform upper and lower bound of the Christoffel function as we will need it later. \\

Now let $\delta_2 \leq reach \ (V\setminus S_{crit, \delta_1})$. Note that for $\delta_1$ small enough we have $\delta_2\geq\delta_1$. 
Then we have, that for any $x\in V\setminus S_{crit, \delta_1}$ that $B_{\delta_1}(x)\cap S_{crit} = \emptyset$.

\begin{theorem}\label{thm_semi_alg}
Let $\delta_1$ as above. Then there is a degree $d$ such that for all $x\in X\setminus S_{crit,\delta_1}$ we have $$f(x) - q_d(x) =0$$ as well as for all $x\in X$ $$\|f-q_d\|_\infty \leq \delta_1.$$
\end{theorem}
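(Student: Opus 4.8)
The plan is to reduce the exact-recovery statement to the positivity dichotomy for the Christoffel function recalled in the introduction, and then to a local geometric analysis of the variety $V$ away from its critical set. Recall that for all $d \geq d_*$ one has $\Lambda_d(x,y) > 0$ if and only if $(x,y) \in V$. Hence for each fixed $x$ the reconstructed value is determined by the fibre $V_x = \{y \in \mathbb R : (x,y) \in V\}$: the maximiser of $y \mapsto \Lambda_d(x,y)$ (equivalently the minimiser of the inverse Christoffel function defining $q_d$) must lie in $V_x$, so it suffices to show that for $x \in X \setminus S_{crit,\delta_1}$ this maximiser is exactly $f(x)$, and that for the remaining $x$ it stays within $\delta_1$ of $f(x)$.

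First I would fix the geometry outside the tube. By the cell decomposition, $f$ is analytic on each cell $A_i$, and the cells $C_i$ of codimension $>1$ --- where the irreducible components of $V$ meet --- have been excised together with their $\delta_1$-neighbourhood. Thus over $X \setminus S_{crit,\delta_1}$ the graph $\gr f$ coincides with a smooth $n$-dimensional stratum of $V$ on which no two components intersect, and by the preliminary observation $\mathbb B_{\delta_1}(x) \cap S_{crit} = \emptyset$ for every such $x$. Using $\delta_2 \leq reach(V \setminus S_{crit,\delta_1})$, the $\delta_2$-neighbourhood of this stratum admits a well-defined nearest-point projection; in particular, the distinct branches of $V$ over a fixed $x$ are separated by at least $\delta_1$, and $(x,f(x))$ is the unique point of the graph stratum over $x$.

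With this separation in hand I would choose $d$. The degree must satisfy $d \geq d_*$ and, in addition, be large enough that the Christoffel function concentrates on the graph stratum: on the compact set $X \setminus S_{crit,\delta_1}$ the value $\Lambda_d(x,f(x))$ must strictly dominate $\Lambda_d(x,y)$ for every other $y \in V_x$. Because the competing branches are uniformly $\delta_1$-separated from the graph, one can make this gap appear for all $d$ beyond a finite threshold; a compactness argument over $X \setminus S_{crit,\delta_1}$ then yields a single $d$ working uniformly. This gives $\argmax_y \Lambda_d(x,y) = f(x)$, hence $q_d(x) = f(x)$, which is the first assertion. For the global bound, note that for every $x \in X$ the reconstruction satisfies $(x,q_d(x)) \in V$, since $\Lambda_d(x,q_d(x)) > 0$; outside the tube it equals $f(x)$, and inside $S_{crit,\delta_1}$ the choice of $\delta_1$ (no open cell is swallowed entirely) together with the reach bound forces the maximiser to remain on the branch carrying the graph, so that $|f(x) - q_d(x)|$ is controlled by the tube radius, giving $\|f-q_d\|_\infty \leq \delta_1$.

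The main obstacle is the concentration step: turning the purely set-theoretic dichotomy $\Lambda_d > 0 \iff (x,y) \in V$ into the quantitative statement that, for $d$ large, the graph branch carries a strictly larger Christoffel value than any competing branch, uniformly in $x$. This requires lower bounds on $\Lambda_d$ along the smooth stratum and matching upper bounds on neighbouring branches, controlled by the separation $\delta_1$ and the reach $\delta_2$; the behaviour near the excised critical cells, where these bounds degenerate, is precisely why the tube must be removed.
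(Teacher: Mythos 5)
Your outline reproduces the paper's architecture --- fix $d\geq d_*$, observe that the maximiser of $y\mapsto\Lambda_d(x,y)$ must lie in the fibre $V_x$, and then separate $(x,f(x))$ from the competing points of $V_x$ by a strict inequality between Christoffel values --- but the step you yourself call ``the main obstacle'' is the entire content of the paper's proof, and the substitute you offer for it fails. You assert that, because competing branches are $\delta_1$-separated, ``one can make this gap appear for all $d$ beyond a finite threshold,'' and that ``a compactness argument over $X\setminus S_{crit,\delta_1}$ then yields a single $d$ working uniformly.'' Compactness cannot do this. Both $\Lambda_d(x,f(x))$ and $\Lambda_d(x,y)$ for off-graph $(x,y)\in V$ are nonincreasing in $d$ and tend to $0$ (the measure is degenerate), so a strict inequality verified at some degree $d_x$ need not persist at larger degrees, and pointwise thresholds $d_x$ cannot be merged into a uniform one without quantitative control. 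What is actually needed is a comparison of decay \emph{rates}, and that is what the paper constructs explicitly: (i) an upper bound off the support, by inserting needle polynomials (scaled Chebyshev polynomials $p_{z,\delta_3,\lfloor d/2\rfloor}$ with $\delta_3=\delta_1/(\delta_1+\delta_{Max})$) as competitors in the minimisation defining $\Lambda_d$, giving the exponential decay $\Lambda_d(z)\leq 2^{2-2\delta_3\lfloor d/2\rfloor}$ for $z\in V\setminus supp(\mu)\setminus S_{crit,\delta_1}$; and (ii) a lower bound on the graph, by smoothing $\mu$ with a Gaussian Dirac sequence $\mu_\epsilon$ and comparing with the Christoffel function of the uniform measure on a box (computed via Legendre polynomials), giving the merely polynomial decay $\Lambda_d(x,f(x))\geq \frac{2\,vol(X)}{\sqrt{2\pi}\,erf(1)\,e\,(1+d)^{2n}}$, uniformly in $\epsilon$. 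Since $(1+d)^{2n}\,2^{-2\delta_3\lfloor d/2\rfloor}\to 0$ as $d\to\infty$, an explicit $d_0$ exists so that for all $d\geq\max\{d_0,d_*\}$ the gap holds simultaneously for all admissible $x$; this is the uniform threshold your outline presupposes but does not produce.

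Two smaller points. Your geometric claim that distinct branches of $V$ over a fixed $x$ are separated by at least $\delta_1$ is neither proved nor needed: the paper's needle-polynomial bound only requires the off-graph point to lie in $V\setminus supp(\mu)\setminus S_{crit,\delta_1}$, and the relevant length scale is $\delta_3$, which involves the diameter of $V$ rather than any branch separation. For the sup-norm bound inside the tube, your argument (the maximiser stays on the branch carrying the graph, so the error is controlled by the tube radius) is essentially the paper's own closing remark, which invokes continuity of $\Lambda_d$ and strictness of the inequality near the tube boundary; that part is consistent with the paper, at the paper's admittedly informal level of detail.
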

\begin{corollary}
If $f$ is a polynomial, then there is a degree $d$ such that $f=q_d$.
\end{corollary}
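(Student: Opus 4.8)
The plan is to derive the corollary directly from Theorem \ref{thm_semi_alg} by showing that for a polynomial the obstructing set $S_{crit}$ is empty, so the exact-recovery conclusion of the theorem holds on all of $X$ for a single fixed degree. The point is that one does not want to route through the approximation estimate $\|f-q_d\|_\infty\leq\delta_1$ and let $\delta_1\to 0$, since the degree $d$ furnished by the theorem may depend on $\delta_1$; instead one wants the first conclusion, exactness on $X\setminus S_{crit,\delta_1}$, with $S_{crit,\delta_1}=X\setminus X=\emptyset$ removed.

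First I would observe that a polynomial $f$ is analytic on all of $X=\mathbb R^n$, so the semi-algebraic decomposition of $X$ into cells $A_i$ on which $f$ is analytic may be taken to be the trivial decomposition consisting of the single full-dimensional cell $A_1=\mathbb R^n$. In particular there are no cells of codimension greater than $1$, hence no critical cells $C_i$, which gives $S_{crit}=\emptyset$. The structural fact underlying this is that the Zariski closure $V$ of $\gr f$ is exactly $\{(x,y):y-f(x)=0\}$, the zero set of the single irreducible polynomial $y-f(x)$, and therefore an irreducible nonsingular hypersurface coinciding with $\gr f$ itself. Since $V$ has only one irreducible component, there are no intersection points of distinct components, which is precisely the locus the paper identifies as $S_{crit}$.

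Consequently $S_{crit,\delta_1}=\bigcup_{x\in S_{crit}}\mathbb B_{\delta_1}(x)=\emptyset$ for every $\delta_1>0$, so $X\setminus S_{crit,\delta_1}=X$. I would then apply Theorem \ref{thm_semi_alg}: it produces a degree $d$, which one checks is at least $\max(\deg f,1)$ so that the defining polynomial $y-f(x)$ lies in $\Pi_d$, for which $f(x)-q_d(x)=0$ holds for all $x\in X\setminus S_{crit,\delta_1}=X$. Hence $f=q_d$ on all of $X$.

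The main point to verify carefully is that the cell decomposition genuinely carries no critical cells for a polynomial; this is where the polynomial hypothesis does all the work. For a general semi-algebraic $f$ the irreducible components of $V$ may intersect and force a nonempty $S_{crit}$, and $q_d$ can fail to be exact near those intersections, which is exactly why the theorem only claims exactness away from the tubular neighborhood $S_{crit,\delta_1}$. Reducing the whole question to the irreducibility and smoothness of $\gr f$ is therefore the crux, and once that is established the corollary follows with no further estimation.
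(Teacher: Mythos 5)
Your proposal is correct and follows essentially the same route as the paper: the paper's own proof is the one-line observation that for a polynomial $f$ the critical set $S_{crit}$ is empty, so one may take $\delta_1=0$ and the exactness conclusion of Theorem \ref{thm_semi_alg} holds on all of $X$. Your additional justification (the trivial cell decomposition, irreducibility of $y-f(x)$, and the remark that one should invoke the exact-recovery part of the theorem rather than the $\|f-q_d\|_\infty\leq\delta_1$ bound) merely fills in details the paper leaves implicit.
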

\begin{proof}[Proof of 
Corollary]
If $f$ is polynomial, then $S_{crit}$ is empty, i.e. we can choose $\delta_1=0$.
\end{proof}

The proof of the theorem will be the rest of this chapter. We will estimate the Christoffel function from above for $(x,y)\in V$ not in the graph of $f$ and then estimate the Christoffel function from below on the graph. We then aim to find a degree $d$ such that these estimates separate between the support of the graph and the rest via the Christoffel function. To this end let $d\mu(x,y) = d\lambda(x)\delta_{f(x)=y}(y)$ be the degenerate measure induced by the graph of $f$.

\paragraph{Upper bounds of the Christoffel function for $(x,y)$ not in the graph of $f$ via Needle Polynomials:} We now we can use the needle polynomials \cite{Kro2013ChristoffelFA} to obtain an upper bound on the Christoffel functions for points in $V$ outside the support of $\mu$. Let $T_d$ be a Chebyshev polynomial of first kind. Then 
$$p_{z,\delta,d}(\tilde z) = \frac{(T_d(1+\delta^2-\|z-\tilde z-y\|^2)}{T_d(1+\delta^2)} $$ 
fulfills $p_{z,\delta,d}(z) = 1, |p_{z,\delta,d}(\tilde z)|\leq 1$ on $\tilde z\in B_1(z)$ and $|p_{z,\delta,d}(\tilde z)|\leq 2^{1-\delta d}$ for $y\in B_1(z)\setminus B_\delta(z)$. 
\\

Consider $\delta_{Max} = \inf \{ \delta >0: \forall x\in \hat S: V\subset B_\delta (x) \}$ where $\hat S$ is the smallest n-dim rectangular containing $V$ (possible since $V$ compact). Notice, that the infimum is actually achieved (again due to compactness). ($\delta_{Max} = 2\cdot diam\ V$)\\

Set $\delta_3 = \frac{\delta_1}{\delta_1 + \delta_{Max}}$
Hence, for $z\in V\setminus supp(\mu)\setminus S_{crit,\delta_1}$ we have
\begin{align*}
    \Lambda_d (z)  
    &= \min_{P\in \Pi_d, P(z)=1} \int P^2(\tilde z)d\mu(\tilde z) \leq \int p_{z,\delta_3,\lfloor \frac d 2 \rfloor}^2(\tilde z)d\mu(\tilde z)\\
    &\leq 2^{2-2\delta_3 \lfloor \frac d 2 \rfloor}\mu(V) = 2^{2-2\delta_3 \lfloor \frac d 2 \rfloor}.
\end{align*}

\paragraph{Lower bounds of the Christoffel function on the graph of $f$:} After we gave an upper bound of the Christoffel function outside the support, we will now lower bound on the support. To this end, let us consider the familiy of meausures

$$d\mu_\epsilon(z) = d\mu_\epsilon(x,y) = d\lambda(x)\delta_\epsilon(x,y)dy$$

where $\delta_\epsilon(x,y) = \frac{1}{\sqrt{2\pi}\ \epsilon\ erf(1)}e^{-\frac{(f(x)-y)^2}{\epsilon^2}}\chi|_{X\times[f(x)-\epsilon,f(x)+\epsilon]}$.\\

As $\delta_\epsilon$ is a Dirac sequence uniformly in $x$ it holds for every $f$ that

$$\lim_{\epsilon\to 0} \int f d\mu_\epsilon = \int f d\mu$$
and therefore especially

\begin{align*}
    \lim_{\epsilon\to 0} \Lambda_{d,\epsilon}(x,y) &=\lim_{\epsilon\to 0} \min_{P,P(x,y)=1} \int P^2(z)d\mu_\epsilon(z) \\
    &\leq \lim_{\epsilon\to 0} \int Q^2(z) d\mu_\epsilon(z)  = \int Q^2(z)d\mu(z)
\end{align*}

for all $Q$ with $Q(x,y) =1$. Hence
$$\lim _{\epsilon\to 0} \Lambda_{\epsilon,d}(z) \leq \Lambda_d(z).$$

Define $\delta_\epsilon^- = \min_{z\in X\times[-\epsilon,\epsilon]}\delta_\epsilon(z) = \frac{1}{\sqrt{2\pi}erf(1)\ e}$. It holds that for $z = (x,f(x))$ with $d(x,\partial X)>\delta$ we have

\begin{align*}
    \Lambda_{d,\epsilon}(z) &= \min_{P(z)=1 } \int_{X\times [-\epsilon,\epsilon]} P^2(\tilde z) d\mu_\epsilon(\tilde z) \geq \delta_\epsilon^-  \min_{P(z)=1} \int_{X\times [-\epsilon,\epsilon]} P^2(\tilde z)d\tilde z \\
    &= \delta_\epsilon^- vol(X\times [-\epsilon,\epsilon]) \Lambda_{box}(z)\geq \delta_\epsilon^- vol(X\times [-\epsilon,\epsilon]) \Lambda_{unit\ box}(0) .
\end{align*}

It turns out, that $\Lambda_{unit\ box,d}(0) = (\sum_{i=0}^d (P_i^2(0))^n)^{-1}$, where $P_i(x)$ is the $i$-th Legendre polynomial and $$P^2_{2i}(0) = (-1)^i\frac{1\cdot 3\cdots(2i-1)}{2\cdot4\cdots 2i} = \frac{\Gamma(k+\frac 1 2)^2}{\pi \Gamma(k+1)^2}$$ and $P_{2i+1}(0)=0$. 
It holds that 
\begin{align*}
    &\sum_{k=0}^d \left(\frac{\Gamma(k+\frac 1 2)^2}{\pi \Gamma(k+1)^2}\right)^n\leq \left(\sum_{k=0}^d \frac{\Gamma(k+\frac 1 2)}{\pi \Gamma(k+1)}\right)^{2n} \\
    &= \left(\frac{(2 (1 + d) \Gamma(\frac 3 2 + d))}{\pi \Gamma(2 + d)}\right)^{2n}\leq (1+d)^{2n}.
\end{align*}

Altogether, we get

$$ \Lambda_{d,\epsilon}(z) \geq \frac{1}{\sqrt{2\pi}\ \epsilon\ erf(1)e}vol(X)2\epsilon \frac 1{(1+d)^{2n}} = \frac{2vol(X)}{\sqrt{2\pi}\ erf(1)e(1+d)^{2n}}.$$

As this is independent of $\epsilon$ we get $$\Lambda_d(z)\geq \frac{2vol(X)}{\sqrt{2\pi}\ erf(1)e(1+d)^{2n}}. $$

Therefore, let $x\in X\setminus Pr(S_{crit,\delta_1})$ such that $d(x,\partial X)\geq \delta_1$. Then 

$$\Lambda_d(x,f(x) \geq \frac{2vol(X)}{\sqrt{2\pi}\ erf(1)e(1+d)^{2n}}$$
and 
$$\Lambda_d(x,y) \leq 2^{2-2\delta_3 \lfloor \frac d 2 \rfloor}$$ where $(x,y)\in V\setminus supp\ \mu\setminus S_{crit,\delta_1}.$

Now we want to find some $d_0$ such that for all $d\geq d_0$ we have that for all $x\in X$ there holds $\Lambda_d(x,y)< \Lambda(x,f(x))$ for $y$ such that $(x,y)\in V.$

This amounts to find a $d_0$ such that for all $d\geq d_0$ we have

\begin{equation}\label{eq::d_ineq}
    \frac{(1+d)^{2n}}{2^{2\delta_3 \lfloor \frac d 2 \rfloor}} < \frac{vol(X)}{2\sqrt{2\pi}\ erf(1)\ e }.
\end{equation}

One can see that $\displaystyle\lim_{d\to\infty}\frac{(1+d)^{2n}}{2^{2\delta_3 d}}=0$ and therefore, there is $d_0$ that we wanted to find.

As a reminder, $d_*>0$ is the degree such that for all $d\geq d_*$ it holds, that if $\Lambda_d(x,y)=0$ then $(x,y)\neq V$. All together we can approximate $f$ exact except from some $\delta$ neighborhood of the critical points of $f$ or its derivative. Notice, that discontinuities are actually no problem. Therefore, for $d\geq\max\{d_0,d_*\}$ we have that for $x\in X\setminus S_{crit,\delta_1}$ it holds that $\Lambda_d(x,y)<\Lambda_d(x,f(x))$ where $(x,y)\in V\setminus supp(\mu)$. This concludes the proof.

\begin{remark}
Let $C =\frac{vol(X)}{2\sqrt{2\pi}\ erf(1)\ e } $. One can go into more detail and rephrase equation \eqref{eq::d_ineq} as
\begin{align*}
    2p\log_2 (1+d) < \log_2(C) +2\delta_3 \lfloor \frac d 2 \rfloor\\
    \Longleftrightarrow \log_2( \frac{1}{C^{\frac 1{2n}}}(1+d))<\frac{2\delta_3}{2n}\lfloor \frac d 2 \rfloor.
\end{align*}
For all $d_0>0$ one has for all $d\geq d_0$ 

$$ \log_2( \frac{1}{C^{\frac 1{2n}}}(1+d))\leq \frac{C^{\frac 1{2n}}}{(1+d_0)}d- \log_2(\frac{1}{C^{\frac 1{2n}}}).$$

Then one can choose $d_0$ such that $\frac{C^{\frac 1{2p}}}{(1+d_0)}< \frac{\delta_3}{n} $, e.g. $d_0= \lceil \frac{C^{\frac 1 {2n}}n}{\delta_3}\rceil-1$. Then there is $d_1$ such that $$\frac{C^{\frac 1{2n}}}{(1+d_0)}d- \log_2(\frac{1}{C^{\frac 1{2n}}})< \frac{\delta_3}{n}d$$ for all $d\geq d_1$. It holds that $d_1 = \left( \frac{C^{\frac{1}{2n}}}{1+d_0}-\frac{\delta_3}{n} \right)^{-1}\log_2(\frac{1}{C^{2n}})$. Hence a sufficient condition for \eqref{eq::d_ineq} would be 
\begin{align*}
    d&\geq \left( \frac{C^{\frac{1}{2n}}}{1+d_0}-\frac{\delta_3}{n} \right)^{-1}\log_2(\frac{1}{C^{2n}}) = \frac{(1+d_0)n}{C^{\frac{1}{2n}}n-\delta_3(1+d_0)}\log_2(\frac 1 {C^{\frac 1 {2n}}})\notag\\
    &= \frac{n}{\frac{C^{\frac{1}{2n}}n}{\lceil \frac{C^{\frac 1 {2n}}n}{\delta_3}\rceil}-\delta_3}\log_2(\frac 1 {C^{\frac 1 {2n}}}).
\end{align*}
As $C$ is independent of $\delta_3$ and by fixing $n$, one can define some $0<\epsilon\leq 1$ such that 
\begin{equation*}
    \frac{C^{\frac{1}{2n}}n}{\lceil \frac{C^{\frac 1 {2n}}n}{\delta_3}\rceil}-\delta_3 = (\epsilon-1)\delta_3
\end{equation*}
and hence
\begin{equation}\label{eq::deltaD}
    d\geq \frac{n}{(\epsilon-1)\delta_3}\log_2(\frac 1{ C^{\frac{1}{2n}}}).
\end{equation}
\end{remark}
\paragraph{What happens inside the tubular neighborhood of the critical points?}
Note that, as the inequality can be made strict and the Christoffel function is continuous on $V$, we have, that the argmax of the Christoffel function in a neighborhood of points on the boundary of the tube are still on the support of the measure.

\section{Corollary results: Definable Functions}
Definable functions are a generalization of semi-algebraic functions. Especially, there is a similar cell decomposition possible. Let us fix an o-minimal structure, see e.g. \cite{Coste2002ANIT}. If $f:X\to\mathbb R$ is a definable function (also called tame function) with definable domain $X$ then there exists a decomposition of $X$ in $C^t$ - cells $A_i$ such that $f|_{A_i}$ is $C^t$. 

We can then approximate the defining equations of $A_i$ and $f|_{A_i}$ by polynomials $p|_{A_i}$ of degree $s$ such that $\|f|_{A_i}-p|_{A_i}\| \leq C \frac1{s^t}$ \cite{polapprox}. 

However, the collections of $p|_{A_i}$ define then a semi-algebraic function and one can use the above considerations to find a degree $d$ to approximate this function exact except on the critical points. However, the approximation deviates on this critical points only by some $\delta$ that can be chosen smaller than the desired accuracy of approximating the $C^t$ functions by polynomials. It is important, that the number of cells plays a role in the degree necessary.

\begin{corollary}
Let $X$ be a definable set and $f:X\to \mathbb R$ be a definable function. For any $t,s\in\mathbb N$ there is a degree $d$ and a constant $C>0$ such that 
$$\|f-q_d\|_\infty\leq C \frac 1 {s^t} $$
\end{corollary}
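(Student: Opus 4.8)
The plan is to reduce the definable case to the semi-algebraic case already settled by Theorem \ref{thm_semi_alg}, via a two-stage approximation: first replace $f$ by a semi-algebraic function $g$ at the polynomial rate $1/s^t$, and then replace $g$ by the Christoffel-induced approximant $q_d$ up to an error that we can force below $1/s^t$. The triangle inequality then delivers the stated bound.

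First I would invoke the $C^t$ cell decomposition for the fixed o-minimal structure: there is a finite family of definable $C^t$-cells $A_1,\dots,A_m$ partitioning $X$ on which each restriction $f|_{A_i}$ is $C^t$, where the number $m$ depends only on $f$ and $t$, not on $s$. On each cell I would apply the polynomial approximation theorem for $C^t$ functions \cite{polapprox} to both the defining (in)equalities of $A_i$ and to $f|_{A_i}$, producing polynomials $p_i$ of degree $s$ with $\|f|_{A_i}-p_i\|_\infty\leq C_i/s^t$. Gluing these pieces over the polynomially approximated cells yields a single semi-algebraic function $g$ with
$$\|f-g\|_\infty \leq \Big(\max_i C_i\Big)\frac{1}{s^t} =: \frac{C_0}{s^t},$$
and $C_0$ is finite precisely because there are only finitely many cells.

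Next I would apply Theorem \ref{thm_semi_alg} to $g$. For any prescribed small $\delta_1>0$ (small enough that every cell open in $\gr g$ survives outside the tube, as the theorem requires) it provides a degree $d$ for which $g-q_d$ vanishes off $S_{crit,\delta_1}$ and, crucially, satisfies the \emph{global} bound $\|g-q_d\|_\infty\leq \delta_1$ on all of $X$. I would then choose $\delta_1\leq C_0/s^t$ (taking the minimum with the theorem's cell-visibility threshold if necessary); the quantitative relation \eqref{eq::d_ineq}--\eqref{eq::deltaD} guarantees a finite admissible $d$ for this $\delta_1$, with $d$ growing as $\delta_1$ is taken smaller. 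The triangle inequality now gives
$$\|f-q_d\|_\infty \leq \|f-g\|_\infty + \|g-q_d\|_\infty \leq \frac{C_0}{s^t} + \delta_1 \leq \frac{2C_0}{s^t},$$
which is the claim with $C=2C_0$.

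The main obstacle is controlling how the required degree $d$ depends on the number of cells $m$ and on the geometry of the critical set of the glued function $g$. Two points need care. First, the critical set $S_{crit}$ of $g$ now contains the interfaces between adjacent polynomial pieces, so I must check that $g$ still satisfies the standing hypotheses of Theorem \ref{thm_semi_alg} (no isolated graph points, and nonempty interior of each open cell outside the tube); the observation following the theorem that discontinuities cause no problem is exactly what renders the interface contributions harmless. Second, the quantities $\delta_{Max}$, $vol(X)$ and hence $\delta_3=\delta_1/(\delta_1+\delta_{Max})$ entering \eqref{eq::d_ineq}--\eqref{eq::deltaD} must be tracked as $s$ varies; since $m$ is fixed and $\delta_1$ is the only quantity we shrink, these stay bounded and the degree $d=d(s,t)$ remains finite for every fixed $s,t$, which is all the statement demands.
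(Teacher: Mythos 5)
Your proposal is correct and follows essentially the same route as the paper's own proof: a $C^t$ cell decomposition, polynomial approximation of the defining functions and of $f$ on each cell to build a semi-algebraic proxy with error $\mathcal{O}(1/s^t)$, then Theorem \ref{thm_semi_alg} applied to that proxy with $\delta_1\lesssim 1/s^t$, and the (in the paper implicit) triangle inequality. Your version merely makes explicit what the paper leaves terse --- the gluing into a single semi-algebraic function, the choice $\delta_1\leq C_0/s^t$, and the resulting constant $C=2C_0$.
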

\begin{proof}
There is a cell decomposition of $X$ in $C^t$ definable cells $A_i$ such that $f|_{A_i}$ is also $C^t$, \cite{Coste2002ANIT}. For every cell $A_i$ there is a finite list of defining $C^n$ functions $h_{ij}$. Each $C^t$ function $h_{ij}$ can be approximated by a degree $s$  polynomial $p_{ij}$ such that $\|h_{ij}-p_{ij}\|_\infty \leq C_{ij}\frac 1 {s^t}$. The collection $p_{ij} $ and the order relations of $h_{ij}$ define a semi-algebraic function $p$. Let $\delta\leq \max C_{ij}\frac 1 {s^t}$. Then there is  a $d$ such that the function $p$ approximated by $q_d$ as in theorem \ref{thm_semi_alg}
.\end{proof}
\begin{corollary}
Let $f\in C^k(X)$ for some $k>0$. Then there is a degree $d$ and a constant $C>0$ such that 
$$\|f-q_d\|_{\infty}\leq C\frac 1 {s^k}.$$
\end{corollary}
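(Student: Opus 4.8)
The plan is to deduce this statement from the preceding Corollary on definable functions by specializing the o-minimal structure and tracking how the approximation rate depends on the smoothness parameter. First I would observe that any $f \in C^k(X)$ need not itself be definable, so the preceding Corollary does not apply verbatim; the honest route is to redo the construction with $f$ playing the role of the function whose graph induces the degenerate measure $\mu$, and to combine the $L^\infty$ guarantee of Theorem \ref{thm_semi_alg} with a Jackson-type polynomial estimate that governs the size $\delta_1$ of the excised tube. The key quantitative input is that a $C^k$ function on a compact domain admits degree-$s$ polynomial approximants with error bounded by $C s^{-k}$; this is the source of the exponent $k$ in the claimed rate.

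Concretely, the steps I would carry out are the following. First, fix $s$ and invoke the Jackson estimate to produce a polynomial $p$ of degree $s$ with $\|f - p\|_\infty \le C_1 s^{-k}$. Second, since $p$ is a polynomial, its critical set $S_{crit}$ is a genuine lower-dimensional semi-algebraic set, so by the Corollary immediately following Theorem \ref{thm_semi_alg} we may apply the reconstruction to $p$ with an excision radius $\delta_1$ that we are free to choose; Theorem \ref{thm_semi_alg} then yields a degree $d$ (depending on $s$, $\delta_1$, $n$ through inequality \eqref{eq::d_ineq}) with $\|p - q_d\|_\infty \le \delta_1$, and with $p = q_d$ exactly off the tube $S_{crit,\delta_1}$. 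Third, I would set $\delta_1 \le C_1 s^{-k}$ so that the tube contribution is of the same order as the Jackson error. Fourth, a triangle inequality $\|f - q_d\|_\infty \le \|f - p\|_\infty + \|p - q_d\|_\infty \le C_1 s^{-k} + \delta_1 \le C s^{-k}$ closes the estimate, absorbing constants into $C$.

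The delicate point, and the place where care is genuinely required, is the coupling between $\delta_1$ and the achievable degree $d$. Shrinking $\delta_1$ to match $s^{-k}$ forces $\delta_3 = \delta_1/(\delta_1 + \delta_{Max})$ toward $0$, and by the refined estimate \eqref{eq::deltaD} the required degree $d$ grows like $\delta_3^{-1}$, hence like $s^k$. So the construction remains finite, but one must verify that the critical set of $p$ does not degenerate as $s$ varies and that the lower bound on the Christoffel function on the graph of $p$ stays uniform; this is exactly the hypothesis $\gr p \subset \overline{\gr p^{\,o}}$ (no isolated graph points) together with $d(x,\partial X) \ge \delta_1$ built into the proof of Theorem \ref{thm_semi_alg}. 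The main obstacle I anticipate is ensuring that these geometric constants ($reach$ of the Zariski closure away from the tube, $\mu(V)$, $vol(X)$) can be bounded independently of $s$, so that the constant $C$ in the final rate is genuinely uniform rather than silently depending on the approximant $p$; this is what distinguishes a true reconstruction bound on $q_d$ from a mere restatement of Jackson's theorem.
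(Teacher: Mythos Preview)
Your outline is essentially the paper's argument---Jackson approximation of $f$ by a polynomial $p$, then Christoffel--Darboux reconstruction of $p$, then a triangle inequality---but you add a layer of work that the paper avoids entirely. The point you miss is that for a \emph{polynomial} $p$, the critical set $S_{crit}$ in the sense of the paper is \emph{empty}: there is a single analytic cell (all of $X$), hence no cells of codimension $\ge 1$. This is exactly the content of the Corollary you cite just after Theorem~\ref{thm_semi_alg}, whose proof reads ``$S_{crit}$ is empty, i.e.\ we can choose $\delta_1=0$.'' So one does not merely get $\|p-q_d\|_\infty\le\delta_1$ for a carefully chosen $\delta_1$; one gets $p=q_d$ exactly for some degree $d$. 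The paper's proof is therefore the two-line argument: approximate $f$ by a degree-$s$ polynomial $p$ with $\|f-p\|_\infty\le C s^{-k}$, recover $p$ exactly as $q_d$, done.

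All of your ``delicate points''---shrinking $\delta_1$ down to $s^{-k}$, the resulting blow-up of $d$ through $\delta_3^{-1}$, uniformity of the reach and of $\mu(V)$ as $s$ varies---are self-inflicted by treating $S_{crit}$ as merely lower-dimensional rather than empty. Your argument would still go through, but the worries in your final paragraph (uniform control of geometric constants in $p$ as $s\to\infty$) are nontrivial to discharge and are completely bypassed by the exact-recovery route.
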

\begin{proof}
As $f\in C^k(X)$ one can approximate it by an polynomial in $\mathcal O(d^{-k})$. Since polynomials can be recovered exactly, the claim follows.
\end{proof}
\begin{remark}
As can be seen from inequality \eqref{eq::deltaD}, one can bound the dependence of $d_0$ on $s^t$ linearly from above. However, $d_*$ will grow with the number of components and degree of the polynomials describing this components. Note that outside the critical set we have $d\in\mathcal O(s)$. Furthermore, this shows, that no oscillation will take place.
\end{remark}

\section{Christoffel-Darboux approximation form a Data-driven perspective}

From a numerical point of view, it is not reasonable to work with the degenerate measure without any regularization. In \cite{PauwelsMarx} a Tikhonov regularization was suggested, which leads to an interpretation in the measure framework for a suitable choice of basis. 

However, in a data-driven framework, the data to recover the moment matrix by Monte Carlo methods are already distributed according to some density. Usually one assumes this density to be some Gaussian. This also has a nice intuition in the polynomial framework. The Gaussian distribution smooths out the 'steepness' of the Christoffel function stemming from the degenerate measure, but the Christoffel-Darboux kernel of the Gaussian perturbed meausure will very likely have the same set of global minimizers. This 'smoothing of steepness' will render the optimization procedure to actually obtain the funciton approximation numerically more stable. 

Also one can observe, that the lower bounds still hold true, as they are actually designed via Gaussian pertubations. Also the upper bound can be adapted, if we use a Gaussian like bumb function with support in the unit ball. Denoting the set $S_\delta = \{(x,y)\in B_1(0): \|(x,y)-(x,f(x))\|\leq\delta \} $, one gets the same lower bound for all $z\in B_1\setminus S_{\delta} $. Hence, we get the following corollary for the Christoffel-Darboux approximate $q_{d,\epsilon}$ induced by the perturbed measure $\mu_{\epsilon}$.
\begin{corollary}
Let $\delta>0$. Then there is a degree $d$ such that for all $x\in X$ we get $$\|f-q_{d,\epsilon}\|_\infty \leq \delta.$$
\end{corollary}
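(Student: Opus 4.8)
The plan is to rerun the separation argument from the proof of Theorem~\ref{thm_semi_alg}, but now for the Christoffel function $\Lambda_{d,\epsilon}$ of the perturbed measure $\mu_\epsilon$, and then to read off the claim from the location of the maximizer in the $y$-variable, since $q_{d,\epsilon}(x)$ is determined by $\argmax_{y}\Lambda_{d,\epsilon}(x,y)$. The lower bound needs no new work: the estimate
$$\Lambda_{d,\epsilon}(x,f(x)) \geq \frac{2\,vol(X)}{\sqrt{2\pi}\ erf(1)\ e\,(1+d)^{2n}}$$
was derived in the lower-bound paragraph directly for $\mu_\epsilon$ and is independent of $\epsilon$, so it transfers verbatim. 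Because $\mu_\epsilon$ distributes comparable mass throughout the $\epsilon$-tube around $\gr f$, the same polynomial-in-$d$ lower bound persists for every $z=(x,y)\in S_\delta$ once $\epsilon\le\delta$.

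Next I would adapt the needle-polynomial bound to the positive-dimensional support of $\mu_\epsilon$. Since $supp(\mu_\epsilon)$ is contained in the $\epsilon$-tube around $\gr f$, every point of $supp(\mu_\epsilon)$ lies at distance at least $\delta-\epsilon$ from any $z\in B_1(0)\setminus S_\delta$. Centering a needle polynomial of width $\delta-\epsilon$ at such a $z$ then gives, exactly as before,
$$\Lambda_{d,\epsilon}(z) \leq \int p_{z,\delta-\epsilon,\lfloor d/2\rfloor}^2 \, d\mu_\epsilon \leq 2^{2-2(\delta-\epsilon)\lfloor d/2\rfloor}\,\mu_\epsilon(B_1),$$
which decays exponentially in $d$; using the compactly supported Gaussian-type bump suggested in the preceding remark in place of the Chebyshev needle yields the same exponential-versus-polynomial gap.

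I would then close the argument as in \eqref{eq::d_ineq}: the exponential upper bound beats the $(1+d)^{-2n}$ lower bound, so there is a degree $d_0$ such that for all $d\ge d_0$ and all $x$ one has $\Lambda_{d,\epsilon}(x,y)<\Lambda_{d,\epsilon}(x,f(x))$ for every $(x,y)\in B_1\setminus S_\delta$. Hence the maximizer in $y$ cannot leave the tube $S_\delta$, which forces $|f(x)-q_{d,\epsilon}(x)|\le\delta$ for every $x\in X$.

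The step I expect to be the main obstacle is the upper bound: one must make the needle estimate \emph{uniform} over the whole positive-dimensional support of $\mu_\epsilon$ rather than over a measure-zero graph, which is what forces the coupling $\epsilon<\delta$ and the careful bookkeeping of the effective width $\delta-\epsilon$. One also has to verify that near the critical set $S_{crit}$ the argmax can still only migrate within $S_\delta$, so that the global $\delta$-bound survives even on the region where Theorem~\ref{thm_semi_alg} gave only the coarser $\delta_1$-estimate rather than exact recovery.
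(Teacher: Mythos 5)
Your proposal follows essentially the same route as the paper: the paper likewise observes that the polynomial-in-$d$ lower bound transfers verbatim because it was derived for the Gaussian-perturbed measure $\mu_\epsilon$ in the first place, adapts the localized (needle/bump) test-polynomial upper bound to points of $B_1\setminus S_\delta$, and concludes via the same exponential-versus-polynomial separation that the maximizer in $y$ must remain inside the tube $S_\delta$. The obstacles you flag --- the coupling $\epsilon<\delta$ with effective width $\delta-\epsilon$, and the behaviour near $S_{crit}$, where points outside the vertical tube can still be Euclidean-close to $supp(\mu_\epsilon)$ --- are exactly the points the paper's own one-paragraph sketch passes over in silence, so your attempt is, if anything, more explicit than the paper's argument.
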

Note, that if the data is exact, i.e. without noise, one can introduce artificial noise by putting one dimensional white noise to the function value, i.e. one creates new samples $(x_i,y_{ij})$ by setting $y_{ij}=f(x_i)+\xi$ for some normally distributed $\xi$. 
\section{Discussion}
 It turns out, that semi-algebraic functions can be approximated exactly by Christoffel-Darboux kernels on their domain of analyticity. Furthermore, the $L^\infty$ error can be bounded on the domain of irregularity. Lastly, using Gaussian pertubations in an Data-driven setting might be a promising alternative to Tikhonov regularization.
\bibliography{main} 
\end{document}